\titleformat{\section}[block]{\normalfont\scshape\filcenter}{\thesection.}{0.5em}{}
\newtheorem{theorem}{Theorem}[section]
\newtheorem{lemma}[theorem]{Lemma}
\def\rt{\ensuremath{\rightarrow}}
\begin{document}
\title{A COMBINATORIAL PROOF FOR 132-AVOIDING PERMUTATIONS WITH A UNIQUE LONGEST INCREASING SUBSEQUENCE}
\author{NICHOLAS VAN NIMWEGEN}
\date{}
\maketitle

\begin{abstract}
We provide a simple injective proof that the number of 132-avoiding permutations with a unique longest increasing subsequence is at least as large as the number of 132-avoiding permutations without a unique longest increasing subsequence.
\end{abstract}

\section*{Introduction}

Let $p = p_{1}p_{2}...p_{n}$ be a permutation. An \emph{increasing subsequence} in $p$ is a subset of (not necessarily consecutive) entries $p_{j_{1}} < p_{j_{2}} < ... < p_{j_{k}}$ such that $j_{1} < j_{2} < ... < j_{k}$. 

We say that $p$ has a \emph{unique longest increasing subsequence}, or ULIS, if $p$ has an increasing subsequence that is longer than all other increasing subsequences. For example 34256178 has a ULIS, namely 345678, but 32456178 does not, since 345678 and 245678 are both increasing subsequences of maximal length 6 in $p$.

The number of all permutations of length $n$ that have a unique longest increasing subsequence has proven to be a difficult problem. They are documented for $n \leq 15$ in Sequence A167995 in the Online Encyclopedia of Integer Sequences \cite{oeis}.

An approach taken to the problem in 2020 by Bóna and DeJonge \cite{bona} considered permutations avoiding patterns of length 3. It is well known that the number of such permutations for any length 3 pattern is $C_{n} = {2n \choose n}/(n+1)$, the $n^{\text{th}}$ Catalan number. Adopting their terminology, they showed, as a corollary of a theorem of \cite{kestel}, that if $u_{n}(132)$ is the number of permutations with a ULIS avoiding 132, then $\lim_{n \rt \infty} u_{n}(132)/C_{n} = 0.5$. As a follow-up question, they asked if $u_{n}(132)/C_{n} \geq 0.5$ for all $n$. Finding a simple injective proof of this has been an open problem for four years. In this paper we provide such a proof.

\pagebreak
\section{Preliminary Lemmas}

We begin with two preliminary lemmas, that will prove essential to our approach:

\begin{lemma}

Let $p = p_{1}p_{2}...p_{n}$ avoid 132. Then for all $i \in [n]$, there is a unique longest increasing subsequence in $p$ that begins at $p_{i}$.

\end{lemma}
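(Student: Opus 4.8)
The plan is to induct on $n$, exploiting the well-known block decomposition of a $132$-avoiding permutation at its largest entry. Writing $p_k = n$, I would first record the basic structural fact that every entry to the left of position $k$ exceeds every entry to its right: if some $p_a < p_c$ with $a < k < c$, then $p_a p_k p_c$ would be an occurrence of $132$. Consequently $p$ splits into a left block $L = p_1 \cdots p_{k-1}$, whose values are $\{n-k+1, \dots, n-1\}$, the entry $p_k = n$, and a right block $R = p_{k+1} \cdots p_n$, whose values are $\{1, \dots, n-k\}$; both $L$ and $R$, read as patterns, are themselves $132$-avoiding and strictly shorter, so the inductive hypothesis applies to each. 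The base case $n = 1$ is immediate.

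The key consequence of the decomposition is that no increasing subsequence can pass from the left block into the right block, since every value in $L$ strictly exceeds every value in $R$. I would then split into three cases according to the location of the starting index $i$. If $i = k$ then $p_i = n$ is maximal and the only increasing subsequence starting there is $(n)$, trivially unique. If $i > k$, every entry after position $i$ lies in $R$ and $p_i \in R$, so a longest increasing subsequence beginning at $p_i$ in $p$ coincides with one in $R$; its uniqueness follows from the inductive hypothesis applied to $R$.

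The remaining case $i < k$ is where the real content lies, and I expect it to be the main obstacle: I must show not merely that the longest increasing subsequence starting at $p_i$ is unique, but that it is forced to terminate at the entry $n$. Since such a subsequence cannot enter $R$, all of its entries other than a possible final $n$ lie in $L$. Given any increasing subsequence starting at $p_i$ that avoids $n$, appending $n$ (which sits at position $k$, after all of $L$, and exceeds every $L$-value) produces a strictly longer one; hence every \emph{longest} such subsequence must end in $n$. Deleting that final $n$ yields a longest increasing subsequence starting at $p_i$ within $L$, which is unique by the inductive hypothesis, and re-appending $n$ recovers a unique longest subsequence in $p$. This closes the induction.
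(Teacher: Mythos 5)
Your proof is correct, but it takes a genuinely different route from the paper's. The paper argues directly by contradiction, with no induction: given two longest increasing subsequences $p_i a_2 \ldots a_k$ and $p_i b_2 \ldots b_k$ starting at $p_i$, it considers the first index $j$ where they differ, assumes without loss of generality $a_j < b_j$, and notes that either $a_j$ precedes $b_j$ in the permutation (in which case $a_j$ can be spliced into the $b$-subsequence between $b_{j-1}$ and $b_j$, producing a longer increasing subsequence) or $b_j$ precedes $a_j$ (in which case $p_i b_j a_j$ is a 132 pattern); either way, a contradiction. Your argument instead inducts on $n$ via the block decomposition at the maximum entry $p_k = n$: all entries left of position $k$ exceed all entries to its right, so subsequences starting in the right block never leave it, and longest subsequences starting in the left block are forced to end at $n$, reducing both cases to strictly shorter 132-avoiders. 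Both proofs are complete, and your handling of the delicate case $i < k$ (the bijection between longest subsequences in $L$ starting at $p_i$ and longest subsequences in $p$ starting at $p_i$, given by appending or deleting the final $n$) is sound. The paper's exchange argument is shorter and isolates exactly where 132-avoidance enters; your induction is longer but yields extra structure for free: it shows the unique longest subsequence from any entry in the left block threads through the maximum, and recursively through the maxima of nested blocks, which is closely related to the rank function $R(p)$ that the paper builds on top of this lemma, and could serve as an alternative foundation for its Lemma 1.2.
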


Note we are not concerned with the subsequence being longest with respect to $p$, but rather with respect to starting at $p_{i}$.

\begin{proof}

Assume otherwise, that there exists $i \in p$ such that $p_{i}a_{2}a_{3}...a_{k}$ and $p_{i}b_{2}b_{3}...b_{k}$ are both maximal length increasing subsequences starting at $p_{i}$. Let $j$ be the smallest natural number such that $a_{j} \neq b_{j}$, and assume without loss of generality $a_{j} < b_{j}$. If $a_{j}$ comes before $b_{j}$, then $p_{i}b_{2}...b_{j-1}a_{j}b_{j}...b_{k}$ is a longer increasing subsequence beginning at $p_{i}$. If instead $b_{j}$ comes before $a_{j}$, then $p_{i}b_{j}a_{j}$ is a 132 pattern. Either way, we get a contradiction. \qedhere

\end{proof}

If the length of the longest increasing subsequence starting at $p_{i}$ is $k$, then we say $p_{i}$ has \emph{rank} $k$. Using this, we can define the rank function $r(p_{i})$ on entries, and $R(p) = r(p_{1})r(p_{2})...r(p_{n})$, where $r(p_{i})$ is the rank of $p_{i}$.

From observation, we can see that $R(p)$ is a sequence of positive integers such that $r(p_{i}) - r(p_{i+1}) \leq 1$ for all $i \in [n-1]$, and that $R(p)$ must end in 1. If $r(p_{i}) - r(p_{i+1}) > 1$, then the longest subsequence starting at $p_{i}$ must not include $p_{i+1}$. Then the second entry of said subsequence, $a$, must be smaller than $p_{i+1}$. Then there must be a 132 pattern formed by $p_{i},p_{i+1},a$.

Let $\mathcal{S}_{n}$ be the set of all sequences of length $n$ satisfying those conditions. Then we have the following lemma:

\begin{lemma}

Let $Av_{n}(132)$ be the set of 132-avoiding permutations of length $n$. Then $R : Av_{n}(132) \rt \mathcal{S}_{n}$ is a bijection.

\end{lemma}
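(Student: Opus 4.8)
The plan is to show $R$ is a bijection by establishing injectivity directly and then proving surjectivity by constructing, for each sequence $s \in \mathcal{S}_n$, a unique $132$-avoiding permutation $p$ with $R(p) = s$. Since $|Av_n(132)| = C_n$ by the well-known enumeration, if I can show that $|\mathcal{S}_n| = C_n$ as well, then injectivity alone suffices; but it seems cleaner and more self-contained to exhibit an explicit inverse map, which simultaneously proves surjectivity and (given injectivity) pins down that $\mathcal{S}_n$ is exactly the image.

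First I would prove injectivity. Suppose $p \neq q$ are two $132$-avoiding permutations with $R(p) = R(q)$. The key is Lemma~1.1: in a $132$-avoiding permutation, the longest increasing subsequence starting at any given entry is unique, so the rank function is genuinely well-defined and moreover the actual increasing subsequence it counts is canonical. I would argue that the rank sequence, read together with the combinatorial constraints, forces the relative order of the entries. Concretely, I expect that the positions of entries of rank $1$ (the right-to-left minima, in the $132$-avoiding world), then rank $2$, and so on, are rigidly determined: an entry of rank $k$ must be immediately extendable by an entry of rank $k-1$ appearing later, and the $132$-avoidance rules out any ambiguity about which later smaller-rank entry it pairs with. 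Pushing this through should recover the one-line notation of $p$ from $R(p)$, giving injectivity.

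For surjectivity I would build the inverse explicitly. Given a sequence $s = s_1 s_2 \ldots s_n \in \mathcal{S}_n$ satisfying $s_i - s_{i+1} \leq 1$ and $s_n = 1$, I would reconstruct the permutation by assigning values. A natural recursive device is to process the sequence and use the rank constraints to decide, for each position, how large its entry must be relative to the others: an entry of rank $k$ is the start of an increasing run of ranks $k, k-1, \ldots$ realizable within the remaining entries, and $132$-avoidance dictates a greedy ``place the smallest available consistent value'' rule. I would verify that the permutation so produced (a) is a genuine permutation of $[n]$, (b) avoids $132$, and (c) has $R(p) = s$, the last point following because the constructed increasing subsequences witness exactly the prescribed ranks while the constraint $s_i - s_{i+1} \leq 1$ prevents any longer subsequence from forming.

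The main obstacle will be the surjectivity/inverse construction: I must show that \emph{every} sequence obeying the two stated conditions is actually achieved, which requires checking that the greedy reconstruction never creates a $132$ pattern and never forces the rank at some position to exceed its prescribed value. Handling the interaction between the ``descent by more than $1$ is forbidden'' condition and $132$-avoidance is delicate, since a careless assignment of values could either introduce a forbidden pattern or inflate a rank. An attractive alternative that sidesteps the trickiest bookkeeping is to combine the clean injectivity argument with a separate count showing $|\mathcal{S}_n| = C_n$ (for instance by a direct bijection between $\mathcal{S}_n$ and Dyck paths, reading the sequence of rank-increments as up/down steps); then injectivity between two finite sets of equal cardinality forces bijectivity, and the explicit inverse becomes unnecessary.
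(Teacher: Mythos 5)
Your fallback route is, in essence, the paper's actual proof, and your diagnosis of where the difficulty sits is exactly right. The paper establishes injectivity and then invokes $|\mathcal{S}_n| = C_n$ (citing Stanley's exercise (6.19u) after reversing the sequences and shifting by one, rather than building the Dyck-path bijection you suggest), so that an injection between finite sets of equal cardinality is automatically a bijection. The one structural difference worth noting is where the greedy reconstruction lives: what you cast as the \emph{surjectivity} step, the paper uses only as the \emph{injectivity} argument. Given $s \in \mathcal{S}_n$, the paper places values from the top down --- $n$ must occupy the leftmost $1$; then $n-1$ must occupy a $2$ to the left of that position if one exists, and otherwise the leftmost remaining $1$; and so on --- and observes that every step is forced, so $s$ has \emph{at most one} preimage. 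Crucially, the paper never verifies that this procedure always terminates in a genuine $132$-avoiding permutation whose rank sequence is $s$: existence of preimages comes for free from the cardinality count. That verification is precisely the ``trickiest bookkeeping'' you flagged as the main obstacle of your primary route, so your proposed sidestep is the published one, and your primary route, if carried out, would yield a strictly more self-contained (but harder) proof. Two cautions if you write up the fallback. First, your injectivity sketch aims at the wrong target: the \emph{positions} of the rank-$k$ entries need no recovering, since they are given by $R(p)$ itself; what must be shown is that the \emph{values} occupying those positions are forced, which is what the top-down placement accomplishes (incidentally, the rank-$1$ entries are the right-to-left \emph{maxima}, not minima). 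Second, if you prefer not to cite Stanley, the count $|\mathcal{S}_n| = C_n$ does need an argument --- reversed and decremented, the sequences are exactly those nonnegative sequences starting at $0$ whose increments are at most $+1$, and the bijection with Dyck paths is standard --- but that is still far safer than attempting the direct surjectivity verification.
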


\begin{proof}

First note $|\mathcal{S}_{n}| = C_{n}$, via (6.19u) in \cite{stanley}, although those sequences instead start at 0 and have each entry be at most one greater than the previous. The equivalence can be seen by reversing all sequences and then adding 1 to every entry.

Thus if we can show that $R$ is an injection, we have a bijection. We do this by starting with an element from $\mathcal{S}_{n}$, and finding the inverse of $R$.

First, clearly $n$ must be placed at the leftmost 1. Now for $(n-1)$ we have two cases, either there is a 2 to the left of the 1, or there is not. In the former case, $(n-1)$ must be placed there, as otherwise whatever entry is placed there will either form a 132 pattern (if $(n-1)$ is placed after $n$) or a subsequence of length 3 with $(n-1)$ and $n$ (if $(n-1)$ is placed before $n$). In the latter case, $(n-1)$ must be placed at the leftmost remaining 1.

This process continues similarly. After each $m$ placed at rank $k$, either $m+1$ is placed at the leftmost rank $k+1$ (if it preceeds the placement of $m$) or at the leftmost remaining location of highest rank less than $k+1$ (if no $k+1$ can be found). Any other placement of $m+1$ will give either a longer subsequence in some location, or a 132 pattern.

Since we have a unique inverse, as at each step we only had a single choice, we have an injection, and since $|Av_{n}(132)| = |\mathcal{S}_{n}|$, we have a bijection. \qedhere

\end{proof}

\section{Result}

It becomes clear from the two lemmas that for $p \in Av_{n}(132)$, $p$ has a ULIS iff $R(p)$ has a maximum value. To show that $u_{n}(132)/C_{n} \geq 0.5$, we show an injection from permutations without a ULIS to permutations with one.

Let $\mathcal{T}_{n}$ be the set of elements of $\mathcal{S}_{n}$ without a unique maximum element, and $\mathcal{T'}_{n}$ be $\mathcal{S}_{n} - \mathcal{T}_{n}$. We define the function $f: \mathcal{T}_{n} \rt \mathcal{T'}_{n}$, which we will later claim is an injection, as follows:

Let $T = t_{1}t_{2}...t_{n} \in \mathcal{T}_{n}$, with maximal element $k$. Let $i,j$ be the final two positions of $T$ such that $t_{i} = t_{j} = k$, with $i < j$. We increase all values on the range $[i,j)$ by one.

This clearly maps from $\mathcal{T}_{n} \rt \mathcal{T'}_{n}$, as by choice of $i,j$, only one value on the range $[i,j)$ has maximal value, and that value becomes a maximum of $k+1$.

Using this, we can define an injection $g$ from permutations of length $n$ without a ULIS to permutations with a ULIS:

\begin{theorem}

Let $U_{n}(132)$ be the set of 132-avoiding permutations with a ULIS, and $V_{n}(132)$ be the set of 132 avoiding permutations without a ULIS. Then $g: V_{n}(132) \rt U_{n}(132)$ defined by $g = R^{-1} \circ f \circ R$ is an injection. Thus, $|U_{n}(132)| \geq |V_{n}(132)|$.

\end{theorem}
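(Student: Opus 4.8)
The plan is to reduce everything to a single statement: that $f : \mathcal{T}_n \to \mathcal{T}'_n$ is injective. Indeed, by the bijection $R$ of the previous section, a permutation $p \in Av_n(132)$ has a ULIS precisely when $R(p)$ has a unique maximum value, so $R$ restricts to bijections $U_n(132) \to \mathcal{T}'_n$ and $V_n(132) \to \mathcal{T}_n$. Since $g = R^{-1} \circ f \circ R$ is then a composite of a bijection, $f$, and a bijection, it is injective on $V_n(132)$ exactly when $f$ is injective, and its image lies in $U_n(132)$ because $f$ maps into $\mathcal{T}'_n$; the inequality $|U_n(132)| \ge |V_n(132)|$ is then immediate from the existence of an injection between these sets. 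Thus all of the content lies in proving that $f$ is injective.

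To prove injectivity I would construct an explicit left inverse $h$, that is, show how to recover $T \in \mathcal{T}_n$ from $S = f(T)$. First, $S$ has a unique maximum; let $i$ be the position achieving it and $M = s_i$ its value. Since $f$ raises only the single surviving copy of the old maximum $k$, pushing it from $k$ to $k+1$, we should have $M = k+1$, so the original maximum is recovered as $k = M - 1$. It then remains to recover the right endpoint $j$ of the shifted interval $[i,j)$; once $i$ and $j$ are both known, $T$ is obtained simply by subtracting $1$ from every entry on $[i,j)$, exactly undoing the construction.

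The key observation — and the step I expect to be the main obstacle — is the characterization of $j$: in $S$, the index $j$ is the \emph{last} position whose value equals $M-1$. To justify this I would track values carefully. By the choice of $i$ and $j$ as the final two occurrences of $k$ in $T$, no entry strictly between them equals $k$, so after adding $1$ the entries on $(i,j)$ are at most $k = M-1$, while the entry at $i$ becomes the unique value $k+1$; meanwhile the untouched entry $t_j = k$ is preserved, and every entry after $j$ was already strictly below $k$ and is also left untouched. Hence the value $M-1$ does occur at $j$ and never afterward, so $j$ is indeed its final occurrence. The same bookkeeping is what confirms that $S$ genuinely lies in $\mathcal{S}_n$: the only constraint in danger is the enlarged descent at the right boundary, and the fact that the unchanged entry $t_j$ equals the maximum $k$ forces $t_{j-1} \le t_j$, keeping that descent within the allowed drop of $1$. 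With $i$ and $j$ thus determined by $S$ alone, the map $h$ sending $S$ to the sequence obtained by decrementing $[i,j)$ satisfies $h(f(T)) = T$, so $f$ is injective; by the reduction above, $g$ is injective and the theorem follows.
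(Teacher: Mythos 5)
Your proposal is correct and follows essentially the same route as the paper: reduce everything to the injectivity of $f$ using the bijection $R$, then argue that $f$ is injective because the data $k$, $i$, $j$ (and hence the preimage) can be recovered from the image. Your explicit left inverse --- in particular the characterization of $j$ as the last occurrence of the value $M-1$ in $f(T)$, and the check that the enlarged descent at the right endpoint stays within the allowed drop of $1$ --- simply spells out details that the paper's terser argument (``they would need to have the same $k,i,j$'') leaves implicit.
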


\begin{proof}

First, note that $g$ clearly maps into $U_{n}(132)$, as by choice of $i,j$, only one element on $[i,j)$ is of maximal rank $k$, and following the transformation we have only one element of rank $k+1$.

As $R,R^{-1}$ are bijections, it suffices to show that the middle step is an injection. For two elements of $\mathcal{T}_{n}$ to map into the same element of $\mathcal{T'}_{n}$, they would need to have the same $k,i,j$, and values of ranks at all positions, but then the two elements are clearly the same. Thus, we have an injection. \qedhere

\end{proof}

\pagebreak

\end{document}